\newtheorem{theorem}{Theorem}[section]
\newtheorem{lemma}[theorem]{Lemma}
\newtheorem{cor}[theorem]{Corollary}
\newtheorem{thm}{Theorem}
\theoremstyle{definition}
\newtheorem{remark}[theorem]{Remark}
\renewcommand{\Re}{\operatorname{Re}}
\renewcommand{\Im}{\operatorname{Im}}
\newcommand{\D}{\operatorname{\{s\in\mathbb{C}:\tfrac{1}{2}<\Re(s)<1\}}}
\newcommand{\meas}[1]{\liminf_{T\to\infty}\frac{1}{T}\operatorname{meas}\left\{\tau\in [0,T]: #1\right\}}
\begin{document}
\title[Joint universality and generalized strong recurrence]{Joint universality and generalized strong recurrence with rational parameter}
\author{{\L}ukasz Pa\'nkowski}
\address{Faculty of Mathematics and Computer Science, Adam Mickiewicz University, Umultowska 87, 61-614 Pozna\'{n}, Poland, and Graduate School of Mathematics, Nagoya University, Nagoya, 464-8602, Japan}
\email{lpan@amu.edu.pl}

\thanks{The author was partially supported by (JSPS) KAKENHI grant no. 26004317 and the grant no. 2013/11/B/ST1/02799 from the National Science Centre.}

\subjclass[2010]{Primary: 11M41}

\keywords{strong recurrence, self-approximation, Riemann zeta function, Riemann hypothesis.
}

\begin{abstract}

We prove that, for every rational $d\ne 0,\pm 1$ and every compact set $K\subset\{s\in\mathbb{C}:1/2<\Re(s)<1\}$ with connected complement, any analytic non-vanishing functions $f_1,f_2$ on $K$ can be approximated, uniformly on $K$, by the shifts $\zeta(s+i\tau)$ and $\zeta(s+id\tau)$, respectively. As a consequence we deduce that
the set of $\tau$ satisfying $|\zeta(s+i\tau)-\zeta(s+id\tau)|<\varepsilon$ uniformly on $K$ has a positive lower density for every $d\ne 0$.
\end{abstract}

\maketitle

\section{Introduction}

In 1981 Bagchi \cite{B81} discovered an interesting connection between the Riemann Hypothesis and Voronin's universality theorem  (see \cite{V}) for the Riemann zeta function $\zeta(s)$. Namely, he proved that $\zeta(s)\ne 0$ for $\Re(s)>\tfrac{1}{2}$ if and only if for every compact set $K\subset\D$ with connected complement and every $\varepsilon$ we have
\begin{equation}\label{eq:SR}
\meas{\max_{s\in K}|\zeta(s+i\tau)-\zeta(s)|<\varepsilon}>0,
\end{equation}
where $\operatorname{meas}\{\cdot\}$ denotes the real Lebesgue measure.
In the language of topological dynamics (see \cite{GH}) \eqref{eq:SR} is called the \emph{strong recurrence} property for the Riemann zeta function.

Bagchi's observation was extended to the case of Dirichlet $L$-functions by himself in \cite{B82} and \cite{B87}, and to the case of general universal L-functions, for which the Generalized Riemann Hypothesis is expected, in \cite[Theorem 8.4]{S}.

Nakamura \cite{N09} suggested the following related problem: find all $d$ such that for every compact set $K\subset\D$ with connected complement and every $\varepsilon$ we have
\begin{equation}\label{eq:Self}
\meas{\max_{s\in K}|\zeta(s+i\tau)-\zeta(s+id\tau)|<\varepsilon}>0.
\end{equation}
This property can be called \emph{generalized strong recurrence} with parameter $d$. However, it should be noted that sometimes in the literature it is called also the self-approximation property with parameter $d$. Using this notion Bagchi's result states that the Riemann Hypothesis is equivalent to the generalized strong recurrence property for $\zeta(s)$ with parameter $d=0$.

Nakamura, in the same paper, gave the partial answer to this question by proving that \eqref{eq:Self} holds if $d$ is algebraic irrational. He also observed that the generalized strong recurrence property holds for almost all real parameters $d$. His result was improved by the author in \cite{P09} to all irrational parameter $d$. The positive answer for non-zero rational $d$ was claimed by Garunk\v{s}tis \cite{Ga} and Nakamura \cite{N10}. Unfortunately, their arguments have a gap, which was pointed out by Nakamura and Pa\'nkowski \cite{NP} and partially filled, in the same paper, for all non-zero rational $d=\tfrac{a}{b}$ with $\gcd(a,b)=1$ and $|a-b|\ne 1$.

The crucial step in the proof of the generalized strong recurrence property with parameter $d$ is to show that the following set
\begin{equation}\label{eq:set}
\left\{\log p: p\text{ is prime}\right\}\cup \left\{d\log p: p\text{ is prime}\right\}
\end{equation}
is linearly independent over $\mathbb{Q}$. It was proved for all algebraic irrational $d$ and for almost all $d$ by Nakamura \cite{N09}. Moreover, by using the six exponential theorem from the theory of transcendental numbers, the author noticed in \cite{P09} that for a given irrational $d$ only a finite number of primes $p$ can possibly be involved in the linear dependence of \eqref{eq:set}. It allowed to prove the following joint universality theorem, which easily implies the generalized strong recurrence property. It was showed by Nakamura for algebraic irrational $d$ and by the author for all irrational $d$.
\begin{thm}\label{th:jointA}
Let $d$ be irrational, $K\subset \D$ be a compact set with connected complement and $f,g$ be continuous non-vanishing functions on $K$, which are analytic in the interior of $K$. Then, for every $\varepsilon>0$, we have
\begin{equation}\label{eq:jointA}
\meas{\begin{array}{l}
\max_{s\in K}\left|\zeta(s+i\tau)-f(s)\right|<\varepsilon\\
\max_{s\in K}\left|\zeta(s+id\tau)-g(s)\right|<\varepsilon
\end{array}
}>0.
\end{equation}
\end{thm}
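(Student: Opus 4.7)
The plan is to follow Bagchi's probabilistic adaptation of Voronin's universality proof, lifted to the joint setting. Let $\Omega:=\prod_p\{z\in\mathbb{C}:|z|=1\}$ with the normalized Haar measure $m_H$, and for $\omega=(\omega(p))_p\in\Omega$ let $\zeta(s,\omega):=\prod_p(1-\omega(p)p^{-s})^{-1}$ be the random Euler product, which converges almost surely to an element of $H(D)$, the space of analytic functions on $D:=\{s\in\mathbb{C}:\tfrac12<\Re(s)<1\}$. To handle both shifts simultaneously I would work on the product $\Omega\times\Omega$ with the curve
\[
\gamma_d(\tau):=\bigl((p^{-i\tau})_p,\ (p^{-id\tau})_p\bigr)
\]
and the random element $(\zeta(s,\omega_1),\zeta(s,\omega_2))$ on $\Omega\times\Omega$ under $m_H\times m_H$.

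The first step is a joint functional limit theorem: as $T\to\infty$, the distribution of $(\zeta(s+i\tau),\zeta(s+id\tau))$ on $H(D)^2$ converges weakly to the distribution of $(\zeta(s,\omega_1),\zeta(s,\omega_2))$. By the classical Fourier criterion on compact abelian groups, this follows once $\gamma_d$ is shown to be equidistributed on $\Omega\times\Omega$, which is equivalent to the $\mathbb{Q}$-linear independence of the set in \eqref{eq:set}. As the introduction recalls from \cite{P09}, the six exponentials theorem produces, for each irrational $d$, a \emph{finite} set $P$ of primes outside of which the analogous set is linearly independent. One therefore obtains equidistribution of $\gamma_d$ on the sub-torus indexed by $p\notin P$; the contribution of the finitely many primes in $P$ is handled as a bounded holomorphic perturbation on $K$, combined with a standard mean-square estimate to truncate the Euler product.

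The second step is to identify the support of the limit measure. Taking logarithms, one needs to show that for every pair of target functions $f,g$ as in the statement there exist unimodular sequences $(\omega_1(p))_p,(\omega_2(p))_p$ such that suitable finite partial sums of $-\log(1-\omega_i(p)p^{-s})$ approximate $\log f$ and $\log g$ uniformly on $K$. This is the classical Pecherski/Voronin rearrangement argument in a Hilbert space of analytic functions on a slightly larger set than $K$. The crucial observation is that the two coordinates $\omega_1(p),\omega_2(p)$ on $\Omega\times\Omega$ are \emph{independent} under $m_H\times m_H$, so once a common sequence of primes has been fixed one can run the rearrangement separately in each coordinate. A portmanteau-type passage from Step~1 then converts the support statement into the positive lower density claimed in \eqref{eq:jointA}.

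The main obstacle is Step~1, and specifically the treatment of the exceptional primes in $P$. On the finite-dimensional torus indexed by $P$ the flow $\gamma_d$ is dense only in a proper subtorus, so the coordinates $\omega_1,\omega_2$ there are \emph{not} independent, and a naive appeal to the joint Haar measure fails. The leverage provided by the six exponentials theorem is precisely that $|P|<\infty$: this lets one factor out a bounded, analytic, non-vanishing Euler product of bounded degree, absorb it into $f$ and $g$ at the cost of shrinking $\varepsilon$, and then apply the rearrangement argument on the complementary sub-torus where everything is independent. Without such a finiteness bound on $P$ (the precise obstruction that forces the rational case to be treated by the different method of the present paper), this decoupling step breaks down.
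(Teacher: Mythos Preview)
The paper does not give its own proof of Theorem~A: it is quoted as a known result, proved for algebraic irrational $d$ by Nakamura \cite{N09} and for all irrational $d$ by the author in \cite{P09}, with only the one-line summary that the six exponentials theorem reduces the problem to a finite set of exceptional primes. So there is no detailed argument in the present paper to compare your sketch against. Your outline is consistent with that summary, and the two standard frameworks you and the paper use---Bagchi's probabilistic limit theorem versus the direct Voronin/Kronecker/mean-square route employed here for Theorem~\ref{th:main}---are known to be interchangeable.

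That said, one step of your sketch is stated too loosely to pass as written. You propose to ``factor out a bounded, analytic, non-vanishing Euler product of bounded degree, absorb it into $f$ and $g$ at the cost of shrinking $\varepsilon$''. The finite Euler factor $E_P(s+i\tau)=\prod_{p\in P}(1-p^{-s-i\tau})^{-1}$ depends on $\tau$, so it cannot simply be absorbed into fixed targets. What the six exponentials application actually buys you is stronger than ``$\{\log p,\,d\log p:p\notin P\}$ is linearly independent'': one can choose the finite set $P$ so that \emph{every} $\mathbb{Q}$-linear relation among $\{\log p\}\cup\{d\log p\}$ involves only primes from $P$. This forces the orbit closure of $\gamma_d$ in $\Omega\times\Omega$ to split as $G_P\times(\Omega^{P^c}\times\Omega^{P^c})$. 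Only with this product structure can you legitimately freeze the $P$-coordinates at a single point of $G_P$ (say the identity, coming from $\tau=0$), replace $f,g$ by $f/E_P(s)$, $g/E_P(s)$, and run the Pecherski rearrangement independently on the $P^c$-coordinates. Without verifying that no relation mixes primes inside and outside $P$, the conditions on the $P$-block and on the $P^c$-block need not be simultaneously satisfiable, and your decoupling step would fail. This is not a fatal gap---the required strengthening follows from the observation that the space $V=\{B\in\mathbb{Q}\text{-span}\{\log p\}:dB\in\mathbb{Q}\text{-span}\{\log p\}\}$ has $\dim_{\mathbb{Q}}V\le 2$ by six exponentials, and one takes $P$ to contain all primes appearing in a basis of $V$ and of $dV$---but it should be made explicit.
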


The above joint universality theorem is also related the following open problem introduced by Andreas Weiermann in 2008 during the conference ``New Directions in the Theory of Universal Zeta- and L-Functions'' in Wu\"rzburg: \\
{\it Assume $a,b$ are transcendental and algebraically independent and functions $f, g$ satisfy the assumptions of the universality theorem. Can we find one single real $\tau$ such that $f$ is approximated by $\zeta(s + ia\tau)$ and $g$ is approximated by $\zeta(s + ib\tau)$ and both approximations are uniformly as usual?}\\
Thus, Theorem A implies that the above open problem is true even for linearly independent real non-zero numbers $a$, $b$.

The case when $d$ is rational is more delicate, since one can easily observe that the set \eqref{eq:set} is linearly dependent over $\mathbb{Q}$, even if we exclude a finite number of primes. So, in order to prove \eqref{eq:Self} for rational $d=\tfrac{a}{b}$ with $|a-b|\ne 1$ and $\gcd(a,b)=1$,  Nakamura and the author \cite{NP} proved only that \eqref{eq:jointA} holds for one common function $f=g$ depending on $d$. Moreover, by the lack of linear independence over $\mathbb{Q}$ of \eqref{eq:set}, it was expected that Theorem \ref{th:jointA} with arbitrary given functions $f,g$ cannot hold for rational  $d$. However, in the present paper we introduce the approach which allow to overcome the fact that \eqref{eq:set} is not linearly independent over $\mathbb{Q}$ and we prove the following joint universality theorem. This theorem solves completely Weiermann's problem, since it is obvious that we cannot expect a positive answer if $a=b$ or $a=-b$ by the fact that $\zeta(\overline{s})=\overline{\zeta(s)}$.

\begin{theorem}\label{th:main}
Let $a,b\in\mathbb{Z}\setminus\{0\}$ with $\tfrac{a}{b}\ne \pm 1$. Assume that $K\subset\{s\in\mathbb{C}:\tfrac{1}{2}<\Re(s)<1\}$ is a compact set with connected complement and $f_a,f_b$ are non-vanishing continuous on $K$ and analytic in the interior of $K$. Then, for every $\varepsilon>0$, we have
\[
\meas{\max_{c\in\{a,b\}}\max_{s\in K}\left|\zeta(s+ic\tau)-f_c(s)\right|<\varepsilon}>0.
\]
\end{theorem}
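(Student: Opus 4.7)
\emph{Proof plan.} I would extend the Bagchi--Voronin probabilistic framework to the coupled shifts $\zeta(s+ia\tau),\zeta(s+ib\tau)$: encode the $\mathbb Q$-linear dependence of \eqref{eq:set} as a single group-theoretic constraint on an infinite-dimensional torus, and prove the required joint support theorem by a rearrangement argument adapted to that constraint. After rescaling $\tau\mapsto\tau/\gcd(a,b)$ one may assume $\gcd(a,b)=1$, and then the hypothesis $a\neq\pm b$ forces at least one of $|a|,|b|$ to be $\geq 2$. Put $\Omega=\prod_p\mathbb T$ with Haar measure $m_\Omega$ and define, as usual, $\zeta(s,\omega)=\prod_p(1-\omega(p)p^{-s})^{-1}$.

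By unique factorization the only $\mathbb Q$-linear relations among $\{a\log p\}_p\cup\{b\log p\}_p$ are the obvious identities $b(a\log p)=a(b\log p)$, one per prime. Therefore the Kronecker--Weyl theorem yields that the orbit $\tau\mapsto((p^{-ia\tau})_p,(p^{-ib\tau})_p)$ equidistributes in the closed subgroup
\[
H=\bigl\{(\omega_1,\omega_2)\in\Omega\times\Omega:\ \omega_1(p)^b=\omega_2(p)^a\text{ for every prime }p\bigr\},
\]
and coprimality makes $z\mapsto((z_p^a)_p,(z_p^b)_p)$ a Haar-measure-preserving isomorphism $\Omega\to H$. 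Via the standard Bagchi limit-distribution argument, Theorem \ref{th:main} reduces to the following \emph{joint support theorem}: for every admissible pair $(f_a,f_b)$ and every $\varepsilon>0$, the set
\[
\bigl\{z\in\Omega:\ \max_{s\in K}|\zeta(s,(z_p^a)_p)-f_a(s)|<\varepsilon,\ \max_{s\in K}|\zeta(s,(z_p^b)_p)-f_b(s)|<\varepsilon\bigr\}
\]
has positive $m_\Omega$-measure; and by Mergelyan it suffices to realize prescribed polynomial targets $(\log f_a,\log f_b)$ as a series $\sum_p\gamma_p(\theta_p)$ in $\mathcal H:=H(K)\oplus H(K)$ for suitable phases $\theta_p\in\mathbb T$, where $\gamma_p(\theta):=\bigl(\log(1-e^{ia\theta}p^{-s})^{-1},\,\log(1-e^{ib\theta}p^{-s})^{-1}\bigr)$.

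The difficulty is that a single phase $\theta_p$ governs both components of $\gamma_p$, so each building block traces a one-parameter curve in a two-dimensional subspace of $\mathcal H$ instead of a circle of phases around a single vector, and the classical Pechersky--Bagchi rearrangement does not apply verbatim. I would proceed in two steps. First, a density lemma: the closed linear span of $\{\gamma_p(\theta):p\text{ prime},\theta\in\mathbb T\}$ is all of $\mathcal H$. This is proved by duality---Fourier expanding in $\theta$ a continuous functional $(\phi,\psi)$ that annihilates every $\gamma_p(\theta)$, the hypotheses $\gcd(a,b)=1$ and $a\neq\pm b$ provide Fourier frequencies at which only one of $\phi,\psi$ contributes, forcing $\int_K p^{-s}\overline{\phi}\,dm=0$ for every prime $p$, hence $\phi=0$ by Bagchi's classical density lemma for prime powers, and then the remaining coupling identities force $\psi=0$ as well. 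Second, a joint rearrangement: split the primes into dyadic blocks and, on each block, use the one free phase per prime to push the running partial sum towards the prescribed target increment in both coordinates simultaneously, with error controlled by the block's Hilbert norm. I expect this second step to be the main obstacle of the whole proof: a Pechersky-type construction has to be adapted to one-parameter families of vectors, and its proof must show that the combined freedom accumulated across a dyadic block of primes still provides enough two-dimensional flexibility to match any prescribed target increment at cost comparable to the block's $\ell^2$-mass---precisely the quantitative ``decoupling'' of the two coordinates at the block level that overcomes the linear dependence of \eqref{eq:set}.
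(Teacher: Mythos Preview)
Your probabilistic reduction to a joint support statement on the torus is sound and essentially equivalent to the paper's direct mean-square argument in Section~3. Your density lemma (that the closed linear span of the curves $\gamma_p(\theta)$ is all of $\mathcal H$) is also correct, and the Fourier-in-$\theta$ duality argument you sketch works. But this lemma is not the crux, and the paper neither states nor uses it: density under \emph{arbitrary scalar coefficients} says nothing about reachability by a phase-constrained series $\sum_p\gamma_p(\theta_p)$, where a single real parameter $\theta_p$ determines both coordinates. The genuine gap is exactly where you place it, at your step~2, and the dyadic-block adaptive-phase scheme you propose supplies no concrete mechanism for why one free phase per prime should suffice to steer two independent directions in $\mathcal H$.

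The paper resolves this by a different and sharper idea. It does \emph{not} choose the $\theta_p$ adaptively and does \emph{not} adapt Pechersky's theorem to one-parameter families. Instead it fixes the phases once and for all as the periodic sequence $\theta_{p_n}=n/l$ for a modulus $l=l(a,b)$, and then applies the classical Pechersky rearrangement theorem (Lemma~2.4) verbatim to the resulting fixed sequence $u_p\in B^2(U)\times B^2(U)$. The entire difficulty becomes verifying Pechersky's hypothesis that $\sum_p\langle u_p,\phi\rangle$ can be rearranged to diverge to $\pm\infty$ for every unit $\phi=(\phi_1,\phi_2)$. This is handled by an argument-control lemma (Lemma~2.2): on a sequence of short intervals $I\subset[x,x+1]$ of length $\gg x^{-N}$ one has $|G_1(t)|\gg e^{-\sigma_2 x}$ while both $\arg G_1(t)$ and $\arg G_2(t)$ are pinned to within a small $\eta$, where $G_j(z)=\iint_U e^{-sz}\overline{\phi_j}$. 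The decisive arithmetic observation is that, assuming $|a|<|b|$, as $\theta$ ranges over the arc keeping $\arg e(a\theta)G_1(t)\in[-\tfrac{\pi}{2}+\eta,\tfrac{\pi}{2}-\eta]$, the value $2\pi b\theta$ sweeps an interval of length $\tfrac{|b|}{|a|}(\pi-4\eta)>\pi$; hence some rational $\theta=k/l$ on that arc also places $\arg e(b\theta)G_2(t)$ in $[-\tfrac{\pi}{2},\tfrac{\pi}{2}]$. Restricting to primes $p_n$ with $\log p_n\in I$ and $n\equiv k\pmod l$ makes both real parts nonnegative with the first bounded below by $\gg e^{-\sigma_2 x}$, and the prime number theorem furnishes $\gg e^{x}/x^{N+2}$ such primes, forcing divergence. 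This combination---a global periodic choice of phases together with a Laplace-transform lower-bound and argument-variation lemma---is precisely the missing mechanism in your outline.
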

\begin{remark}
It should be mentioned  that the above theorem can be easily generalized to the wide class of $L$-functions, for which a universality theorem is proved by Voronin's approach (for example for a wide class introduced in \cite[Chapter VII, Section 3.1]{KV} or \cite[Section 3]{KK}).

Moreover, the above theorem together with Theorem A can be treated as a new method how to approximate more than one function by certain modifications of one zeta or $L$-function. Indeed, the above results say that if we desire to approximate two analytic non-vanishing functions $f,g$ by a given $L$-function $L(s)$ it suffices  to consider the shifts $L(s+i\tau)$, $L(s+id\tau)$, where $d$ is non-zero real number $\ne \pm 1$. A different well-known method of this kind is to consider twists of $L(s)$ with pairwise non-equivalent Dirichlet characters (see \cite[Theorem 12.8]{S}). 
\end{remark}

As an immediate consequence of Theorem \ref{th:main} we obtain the following corollary.
\begin{cor}
Let $d\ne 0,\pm 1$ be a rational number. Assume that $K\subset\{s\in\mathbb{C}:\tfrac{1}{2}<\Re(s)<1\}$ is a compact set with connected complement and $f$, $g$ are non-vanishing continuous on $K$ and analytic in the interior of $K$. Then, for every $\varepsilon>0$, we have
\[
\meas{\begin{array}{l}
\max_{s\in K}\left|\zeta(s+i\tau)-f(s)\right|<\varepsilon\\
\max_{s\in K}\left|\zeta(s+id\tau)-g(s)\right|<\varepsilon
\end{array}
}>0.
\]
\end{cor}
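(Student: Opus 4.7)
The plan is to deduce this corollary directly from Theorem \ref{th:main} by a linear rescaling of the parameter $\tau$. First I would write the rational number $d$ in lowest terms as $d=a/b$ with $a\in\mathbb{Z}\setminus\{0\}$, $b\in\mathbb{Z}_{>0}$, and $\gcd(|a|,b)=1$; the condition $d\ne\pm 1$ translates to $|a|\ne b$, so the pair $(a,b)$ satisfies the hypothesis $a/b\ne\pm 1$ of Theorem \ref{th:main}.

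Next I would substitute $\tau=b\sigma$. Under this change of variable the two shifts become $\zeta(s+i\tau)=\zeta(s+ib\sigma)$ and $\zeta(s+id\tau)=\zeta(s+ia\sigma)$, because $d\cdot b=a$. Applying Theorem \ref{th:main} to the integers $a,b$, the compact set $K$, and the continuous non-vanishing functions $f_b:=f$, $f_a:=g$ then produces a set of $\sigma\ge 0$ of positive lower density on which
\[
\max_{s\in K}|\zeta(s+ib\sigma)-f(s)|<\varepsilon\quad\text{and}\quad\max_{s\in K}|\zeta(s+ia\sigma)-g(s)|<\varepsilon
\]
hold simultaneously.

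Finally, I would transfer the positive lower density back from $\sigma$ to $\tau$. Since $b$ is a fixed positive integer, the affine map $\sigma\mapsto b\sigma$ scales Lebesgue measure on $\mathbb{R}$ and maps the interval $[0,T/b]$ bijectively onto $[0,T]$ by the same factor $b$, so
\[
\tfrac{1}{T}\operatorname{meas}\{\tau\in[0,T]:\mathcal{E}(\tau)\}=\tfrac{1}{T/b}\operatorname{meas}\{\sigma\in[0,T/b]:\mathcal{E}(b\sigma)\},
\]
where $\mathcal{E}(\tau)$ denotes the corollary's pair of inequalities; taking $\liminf$ as $T\to\infty$, equivalently $T/b\to\infty$, preserves positivity and yields the claim.

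The main obstacle has already been overcome inside Theorem \ref{th:main} itself: the heart of the paper is precisely the fact that, for nonzero rational $d\ne\pm 1$, the set $\{\log p:p\text{ prime}\}\cup\{d\log p:p\text{ prime}\}$ fails to be $\mathbb{Q}$-linearly independent and yet joint universality nonetheless holds. Once that theorem is in hand, the corollary carries no further analytic difficulty; it is simply the change of variable outlined above.
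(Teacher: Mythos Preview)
Your proposal is correct and is exactly the intended deduction: the paper states the corollary as an immediate consequence of Theorem~\ref{th:main} without further argument, and the substitution $\tau=b\sigma$ together with the measure-scaling observation is precisely the natural way to see this.
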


Obviously, taking $f\equiv g$ in the above corollary proves that \eqref{eq:Self} is true for all rational $d\ne 0,\pm 1$. However, for $d=1$ the inequality \eqref{eq:Self} holds trivially, and the generalized strong recurrence property for $d=-1$ is implied by $\overline{\zeta(s+i\tau)}=\zeta(\overline{s}-i\tau)$ and the fact that, by Voronin's theorem, we have $\max_{s\in K\cup\overline{K}}|\zeta(s+i\tau) -1|<\varepsilon$. Therefore, the following result holds.
\begin{theorem}
Let $d\ne 0$ be a real number and $K\subset\{s\in\mathbb{C}:\tfrac{1}{2}<\Re(s)<1\}$ be a compact set with connected complement. Then, for every $\varepsilon>0$, we have
\[
\meas{\max_{s\in K}\left|\zeta(s+i\tau)-\zeta(s+id\tau)\right|<\varepsilon
}>0.
\]
\end{theorem}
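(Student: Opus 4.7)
The plan is to split into cases according to the value of $d$. The case $d=1$ is immediate: the difference $\zeta(s+i\tau)-\zeta(s+id\tau)$ vanishes identically in $\tau$, so the set inside the liminf equals $[0,T]$ and its lower density equals $1$.

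For $d=-1$, I would exploit the reflection identity $\overline{\zeta(s+i\tau)}=\zeta(\overline{s}-i\tau)$, which in particular gives $\zeta(s-i\tau)=\overline{\zeta(\overline{s}+i\tau)}$. First I would choose a compact set $L\subset\{s\in\mathbb{C}:\tfrac{1}{2}<\Re(s)<1\}$ with connected complement containing $K\cup\overline{K}$; such an $L$ exists because $K$ is bounded in the strip and the bounded components of $\mathbb{C}\setminus(K\cup\overline{K})$ can be filled in. By Voronin's classical universality theorem applied to $L$ with the non-vanishing target function $1$, the set of $\tau\in[0,T]$ for which $\max_{s\in L}|\zeta(s+i\tau)-1|<\varepsilon/2$ has positive lower density. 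Since $s,\overline{s}\in L$ whenever $s\in K$, the triangle inequality gives
\[
|\zeta(s+i\tau)-\zeta(s-i\tau)|\le|\zeta(s+i\tau)-1|+|\zeta(\overline{s}+i\tau)-1|<\varepsilon
\]
for all such $\tau$ and all $s\in K$.

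For the remaining cases I would appeal to the joint universality theorems already proved. When $d$ is irrational, apply Theorem \ref{th:jointA} with $f\equiv g\equiv 1$ and tolerance $\varepsilon/2$; when $d$ is rational with $d\ne 0,\pm 1$, apply the preceding Corollary (which follows from Theorem \ref{th:main}) in exactly the same way. In either case the triangle inequality $|\zeta(s+i\tau)-\zeta(s+id\tau)|\le|\zeta(s+i\tau)-1|+|1-\zeta(s+id\tau)|$ closes the argument on the set of $\tau$ supplied by the invoked theorem. Since the heavy lifting is done by Theorem \ref{th:main} and Theorem \ref{th:jointA}, the only remaining subtlety is the $d=-1$ case, where one must verify that Voronin's theorem can be applied to a suitable compact enlargement of $K\cup\overline{K}$ with connected complement.
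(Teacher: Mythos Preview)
Your proposal is correct and follows essentially the same route as the paper: the paper handles $d=1$ trivially, deduces the irrational case from Theorem~\ref{th:jointA}, the rational case $d\ne 0,\pm1$ from the Corollary to Theorem~\ref{th:main} with $f\equiv g$, and treats $d=-1$ via the reflection identity together with Voronin's theorem on $K\cup\overline{K}$. Your write-up is in fact slightly more careful than the paper's for $d=-1$, since you explicitly pass to a compact superset $L\supset K\cup\overline{K}$ with connected complement (e.g.\ a closed rectangle inside the strip), whereas the paper applies Voronin directly to $K\cup\overline{K}$ without commenting on this hypothesis.
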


The above theorem reduces Nakamura's question to the case when $d=0$, which, as we mentioned before, is equivalent to the Riemann Hypothesis.

\section{Denseness lemma}

The so-called denseness lemma (see Lemma \ref{lem:dense} below) plays a crucial role in the proof of our main theorem, and, essentially, contains the main idea of this paper how to overcome the lack of linear independence of \eqref{eq:set}. In order to prove it we need the following lemmas concerning analytic functions of exponential type.

\begin{lemma}[{\cite[Lemma 6]{M}}]\label{lem:M}
Let $G(z)$ be an analytic function satisfying
\[
0\not\equiv G(z)=\sum_{m=0}^\infty\frac{\alpha_m}{m!}z^m,\qquad |\alpha_m|<A^m
\]
for some positive constant $A$. Let $c_1>0$ and $N_1$ be a positive integer. Then there exist a positive $c_2$ and a positive integer $N_2>N_1$ such that for any sufficiently large $x$ the interval $[x,x+c_1x^{-N_1}]$ contains a subinterval $I$ of length $|I|\geq c_2x^{-N_2}$ such that $G(t)$ has no zeros on $I$.
\end{lemma}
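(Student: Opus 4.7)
The overall plan is to use Jensen's formula to bound the number of zeros of $G$ in a disk of radius comparable to $x$, then apply a pigeonhole argument to locate a long zero-free subinterval of $[x,x+c_1 x^{-N_1}]$.

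First I would observe that the growth condition $|\alpha_m|<A^m$ gives, for every $z\in\mathbb{C}$,
\[
|G(z)|\le \sum_{m=0}^{\infty}\frac{|\alpha_m|}{m!}|z|^m\le e^{A|z|},
\]
so $G$ is entire of exponential type at most $A$. If $G$ has a zero of order $\ell\ge 0$ at the origin, write $G(z)=z^{\ell}H(z)$, so that $H$ is entire of exponential type at most $A$ with $H(0)\ne 0$; the nonzero zeros of $G$ coincide with those of $H$. Let $n(R)$ denote the number of zeros of $H$ in the disk $|z|\le R$. Applying Jensen's formula to $H$ at radius $2x$ yields
\[
n(x)\log 2\le \int_{x}^{2x}\frac{n(t)}{t}\,dt\le \int_{0}^{2x}\frac{n(t)}{t}\,dt\le \log M(2x)-\log|H(0)|\le 2Ax-\log|H(0)|,
\]
where $M(R)=\max_{|z|=R}|H(z)|$. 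Hence there is a constant $C=C(A,H(0))$ such that $n(x)\le Cx$ for all sufficiently large $x$.

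Since for large $x$ the interval $[x,x+c_1 x^{-N_1}]$ lies inside $|z|\le 2x$, it contains at most $Cx+\ell$ real zeros of $G$. These partition $[x,x+c_1 x^{-N_1}]$ into at most $Cx+\ell+1$ subintervals, and by the pigeonhole principle one of them, call it $I$, has length
\[
|I|\ge \frac{c_1 x^{-N_1}}{Cx+\ell+1}\ge c_2 x^{-N_1-1}
\]
for a suitable $c_2>0$ and all sufficiently large $x$. Setting $N_2:=N_1+1$ then finishes the argument.

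The step I expect to be the main technical obstacle is the Jensen estimate in the case $G(0)=0$ or when $|H(0)|$ is very small; but once $H$ is fixed by factoring out $z^{\ell}$, the quantity $\log|H(0)|^{-1}$ is simply a harmless constant absorbed into $C$, so the bound $n(x)=O(x)$ is indeed uniform for large $x$ and no further subtlety arises.
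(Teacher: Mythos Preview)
The paper does not prove this lemma itself; it is quoted from Mishou \cite[Lemma~6]{M} and used as a black box. Your argument is correct and is the standard one: the growth bound $|G(z)|\le e^{A|z|}$ together with Jensen's formula gives $n(R)=O(R)$ zeros in $|z|\le R$, and pigeonholing over the $O(x)$ real zeros lying in $[x,x+c_1x^{-N_1}]$ yields a zero-free subinterval of length $\gg x^{-N_1-1}$, so that $N_2=N_1+1$ already suffices.

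Two cosmetic points, neither of which affects the argument. First, you bound $n(x)$ but the interval sits inside $|z|\le 2x$, so strictly speaking you need $n(2x)$; of course $n(2x)\le 2Cx$ follows at once by substituting $2x$ for $x$ in your estimate. Second, the pigeonhole step literally produces a subinterval whose endpoints may themselves be zeros of $G$; shrinking it by a fixed factor (say $\tfrac12$) gives a genuinely closed zero-free interval of length still $\ge c_2 x^{-N_1-1}$ for a suitable $c_2>0$.
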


\begin{lemma}\label{lem:Arg}
Let $U\subset\mathbb{C}$ be a simply connected bounded smooth Jordan domain with $\overline{U}\subset D$. Assume that for all $s\in U$ we have $1/2<\sigma_1<\Re  s< \sigma_2<1$ and $g_1$, $g_2$ are non-zero elements of the Bergman space $B^2(U) = \{f\in L^2(U):f\text{ is holomorphic on $U$}\}$. For $z\in\mathbb{C}$ we put
\[
G_j(z)=\iint_G e^{-sz}\overline{g_j(s)}d\sigma dt,\qquad j=1,2.
\]
Then for every $\eta$ with $0<\eta<\pi/2$ there exist a sequence $x_n$ tending to $\infty$ and intervals $I_n\subset[x_n,x_n+1]$ of length $|I_n|\geq Bx_n^{-N}$, ($N>0$, $B:=B(U,\eta)>0$), such that for all $t\in I$ we have
\[
|G_1(t)|\gg e^{-\sigma_2 x_n}
\]
and, moreover, the argument of $G_1(t)$ and $G_2(t)$ on $I$ varies less than $\eta$.
\end{lemma}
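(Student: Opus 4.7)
The plan is to realise $G_1$ and $G_2$ as entire functions of exponential type satisfying the hypotheses of Lemma \ref{lem:M}, apply that lemma to their product in order to harvest the zero-free intervals, and then squeeze out the quantitative lower bound on $|G_1|$ and the argument control by more delicate estimates. First, expanding the exponential inside the integral gives
\[
G_j(z)=\sum_{m=0}^{\infty}\frac{(-1)^m}{m!}\mu_m^{(j)}z^m,\qquad \mu_m^{(j)}=\iint_U s^m\overline{g_j(s)}\,d\sigma\,dt,
\]
and the Cauchy--Schwarz inequality yields $|\mu_m^{(j)}|\leq A^m\|g_j\|_{B^2(U)}\sqrt{\operatorname{area}(U)}$ with $A:=\sup_{s\in\overline U}|s|$, so (after enlarging $A$) Lemma \ref{lem:M}'s coefficient bound is satisfied. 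Neither $G_j$ is identically zero, for if every $\mu_m^{(j)}$ vanished then the $B^2(U)$-functional $h\mapsto\langle h,g_j\rangle$ would annihilate every monomial, and by the Farrell--Markushevich polynomial-density theorem for simply connected $U$ this forces $g_j\equiv 0$. Applying Lemma \ref{lem:M} with $c_1=1$, $N_1=0$ to the product $G_1G_2$ (whose Taylor coefficients are dominated by $(2A)^m$) then produces $x_n\to\infty$ and subintervals $I_n\subset[x_n,x_n+1]$ of length $|I_n|\gg x_n^{-N}$ on which neither $G_1$ nor $G_2$ vanishes.

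Next I would establish the lower bound $|G_1(t)|\gg e^{-\sigma_2 x_n}$ on $I_n$ (possibly after a polynomial-factor shrinkage). Expanding the square and performing the $t$-integration first gives
\[
\int_0^X|G_1(t)|^2 e^{2\sigma_2 t}\,dt=\iint_U\iint_U\frac{e^{(2\sigma_2-s-\overline{s'})X}-1}{2\sigma_2-s-\overline{s'}}\,\overline{g_1(s)}g_1(s')\,dA\,dA',
\]
and since $\Re(2\sigma_2-s-\overline{s'})=2\sigma_2-\sigma-\sigma'>0$ throughout $U\times U$, the right-hand side tends to $+\infty$ as $X\to\infty$. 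Hence the window integrals $\int_x^{x+1}|G_1(t)|^2 e^{2\sigma_2 t}\,dt$ are bounded below by a positive absolute constant along a sequence of $x$'s of positive density, so each such window contains a point $t^{*}$ with $|G_1(t^{*})|\gg e^{-\sigma_2 x_n}$. A second application of Lemma \ref{lem:M}, this time to the entire function $G_1(z)G_2(z)\bigl(G_1(z)e^{\sigma_2 z}-c\bigr)$ with $c$ slightly smaller than the just-obtained constant, transfers the pointwise estimate to the whole subinterval $I_n$ at the price of another polynomial-factor shrinkage.

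Finally, on the zero-free $I_n$ the argument variation is controlled through
\[
\arg G_j(t_2)-\arg G_j(t_1)=\int_{t_1}^{t_2}\Im\frac{G_j'(t)}{G_j(t)}\,dt,
\]
and a Cauchy estimate on a disk of radius $\tfrac14|I_n|$ bounds $|G_j'/G_j|$ in terms of $\sup\log|G_j|$ on the boundary, which by the trivial upper bound $|G_j|\ll e^{-\sigma_1 t}$ and the lower bound just obtained is $O(x_n)$. Integrating over an interval of length $\ll x_n^{-N}$ then produces an argument oscillation of order $x_n^{1-N}<\eta$ for all large $n$, and the same argument handles $G_2$. The main obstacle is the quantitative lower bound in the middle step: Lemma \ref{lem:M} is a purely zero-free statement, whereas we require an honest size estimate of order $e^{-\sigma_2 x_n}$, and arranging the sequence $x_n$ so that the two invocations of Lemma \ref{lem:M}---for zero-freeness of $G_1G_2$ and for the size witness---share a common subinterval is the technical heart of the argument.
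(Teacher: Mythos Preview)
Your outline has two genuine gaps, and both occur precisely at the points you flag as delicate.

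\textbf{The lower bound on $|G_1|$ (your Step 5).} Applying Lemma~\ref{lem:M} to $G_1(z)G_2(z)\bigl(G_1(z)e^{\sigma_2 z}-c\bigr)$ produces a subinterval on which $G_1(z)e^{\sigma_2 z}\neq c$. But $G_1(t)e^{\sigma_2 t}$ is \emph{complex}-valued on the real axis, so avoiding the single complex value $c$ says nothing about $|G_1(t)e^{\sigma_2 t}|$; it certainly does not force $|G_1(t)|\gg e^{-\sigma_2 x_n}$ on the whole subinterval. (The same objection applies to any attempt to replace $c$ by a real constant.) The paper instead uses a Phragm\'en--Lindel\"of argument (via \cite[Lemma~3]{KK}) to locate a single point $x_n$ with $|G_1(x_n)|\gg e^{-\sigma_2 x_n}$, then approximates $G_1(t)=P(t)+O(e^{-Ct})$ on $[x_n,x_n+1]$ by a polynomial of degree $O(x_n)$ and invokes Markov's inequality $\max|P'|\ll x_n^2\max|P|$ to propagate the lower bound to an interval of length $\asymp x_n^{-2}$.

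\textbf{The argument variation (your Step 6).} Even granting the lower bound, your logarithmic-derivative estimate is off by an exponential factor. The only global upper bound available is $|G_1(t)|\ll e^{-\sigma_1 t}$, while the lower bound is $|G_1(t)|\gg e^{-\sigma_2 x_n}$; the ratio is $e^{(\sigma_2-\sigma_1)x_n}$, so any Cauchy or Borel--Carath\'eodory estimate on a disk of radius $r$ gives $|G_1'/G_1|\ll r^{-1}e^{(\sigma_2-\sigma_1)x_n}$ rather than $O(x_n)$, and the integrated argument variation does not tend to zero. The paper circumvents this for $G_1$ again via the polynomial approximation and Markov's inequality, which controls $P(t)-P(x_0)$ (hence the argument) directly without ever estimating $G_1'/G_1$. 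For $G_2$ the situation is worse: you have no lower bound on $|G_2|$ at all, only zero-freeness, so no logarithmic-derivative argument can work. The paper's device here is different and rather clever: it applies Lemma~\ref{lem:M} not to $G_2$ but to the entire functions $\Re G_2$ and $\Im G_2$ (more precisely, to their holomorphic extensions built from the real and imaginary parts of the Taylor coefficients of $G_2$), obtaining a subinterval on which neither vanishes, so that $\arg G_2$ is confined to a single quadrant; iterating with $e^{-i\pi/4}G_2$, $e^{-i\pi/8}G_2$, \ldots\ halves the angular window at each step until it drops below $\eta$.

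In short, Lemma~\ref{lem:M} alone is too weak to yield either the modulus lower bound or the argument control; the missing ingredients are the polynomial approximation with Markov's inequality for $G_1$, and the iterated real/imaginary-part trick for $G_2$.
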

\begin{proof}
Firstly, let us find a sequence $x_n$. Notice that $G_1\not\equiv 0$, since otherwise, taking the $n$-th derivative of $G_1$ at point $z=0$ and using the fact that the linear space of polynomials is dense in the Bergman space (see for example \cite[Theorem 7.2.2]{QQ}), we get the contradiction with $g\ne 0$. On the other hand, $G_1(z)\ll e^{c|z|}$ for some positive constant $c$ depending on $U$, and for sufficiently small $\omega=\omega(U)>0$ and for all complex $z$ with $|\arg(-z)|\leq \omega$ we have
\[
|e^{\sigma_2 z}G_1(z)|\ll 1.
\]
Hence, by \cite[Lemma 3]{KK}, which proof based on the Phragm\'en-Lindel\"of principle, there exists a real sequence $x_n$ tending to $\infty$ such that
\[
|G_1(x_n)|\gg e^{-\sigma_2 x_n}.
\]

Let us fix $n$ and put $x=x_n$. As in the proof of \cite[Lemma 4]{KK} one can prove that for $t\in[x,x+1]$  and every $C>0$ we have
\[
G_1(t)=P(t)+O(e^{-Ct}),
\]
where $P(t)$ is a polynomial of degree $\ll x$. 

Let $x_0\in[x,x+1]$ be such that $|P(x_0)| = \max_{x\leq t\leq x+1}|P(t)|$. Then by Markoff's inequality (see eg. \cite{Sch}) we have
\[
\max_{x\leq t\leq x+1}|P'(t)|\ll x^2|P(x_0)|
\]
and hence for $t\in[x,x+1]$ satisfying $|t-x_0|\leq \frac{B_0}{x_0^2}$ with sufficiently small $B_0>0$ we have
\begin{equation}\label{eq:meanValue}
\begin{split}
|P(x_0)|-|P(t)|&\leq |P(t)-P(x_0)|\leq \max_{x\leq t\leq x+1}|P'(t)||t-x_0|\ll B_0|P(x_0)|\\
&\leq \sin\left(\frac{\eta}{2}\right)|P(x_0)|,
\end{split}
\end{equation}
so
\[
|P(t)|\geq \left(1-\sin\left(\frac{\eta}{2}\right)\right)|P(x_0)|\geq \left(1-\sin\left(\frac{\eta}{2}\right)\right)|P(x)|.
\]
Therefore, for $t\in I_0:=[x,x+1]\cap [x_0-B_0/x_0^2,x_0+B_0/x_0^2]$ it holds
\begin{align*}
\left(1-\sin\left(\frac{\eta}{2}\right)\right)|G_1(x)|&\leq \left(1-\sin\left(\frac{\eta}{2}\right)\right)|P(x)|+O\left(e^{-Cx}\right)\\
&\leq |P(t)|+O\left(e^{-Cx}\right)\leq |G_1(t)|+O\left(e^{-Cx}\right)
\end{align*}
and hence
\[
|G_1(t)|\geq \left(1-\sin\left(\frac{\eta}{2}\right)\right)|G_1(x)|+O\left(e^{-Cx}\right)\gg e^{-\sigma_2 x}.
\]
Using again \eqref{eq:meanValue} we get that
\begin{align*}
\left|\frac{G_1(t)}{G_1(x_0)}-1\right|&\leq \frac{|P(t)-P(x_0)|+O(e^{-Cx})}{|G_1(x_0)|}\leq \frac{\sin\left(\frac{\eta}{2}\right)|P(x_0)|+O(e^{-Cx})}{|G_1(x_0)|}\\
&\leq \sin\left(\frac{\eta}{2}\right)+O(e^{-(C-\sigma_2)x}).
\end{align*}
Thus $\left|\arg\frac{G_1(t)|}{G_1(x_0)}\right|\leq \eta$ on $I_0$ for sufficiently large $x$.

Now, we use Lemma \ref{lem:M} to find subinterval $I$ of $I_0$ such that $\arg G_2(t)$ varies at most $\eta$ on $I$. The fact that $g_2$ is analytic implies (see the proof of \cite[Lemma 7.1]{KV}) that
\[ 
G_2(z)=\sum_{m=0}^\infty\frac{\alpha_m}{m!}z^m,\qquad |\alpha_m|<A^m
\]
for some $A>0$. Moreover, since $g_2\ne 0$, we have $G_2\not\equiv 0$. 

Let us define $\beta_m=\frac{\alpha_m+\overline{\alpha_m}}{2}$, $\gamma_m=\frac{\alpha_m-\overline{\alpha_m}}{2i}$ and put
\[
G^{1}_2(z) = \sum_{m=0}^\infty\frac{\beta_m}{m!}z^m,\qquad G^{2}_2(z) = \sum_{m=0}^\infty\frac{\gamma_m}{m!}z^m.
\]
Then for any real $t$ we have $G^{1}_2(t) = \Re G_2(t)$ and $G^{2}_2(t) = \Im G_2(t)$. 

Now, by Lemma \ref{lem:M} to $G^{1}_2(z)$ and $G^{2}_2(z)$, we can find subinterval $I_1\subset I$ of length $|I_1|\geq B_1x^{-N_1}$ such that $\Re G_2(t)$ and $\Im G_2(t)$ have no zeros on $I_1$. Therefore, there exists $k_1\in\{0,1,2,3\}$ such that for $t\in I_1$ we have
\[
\frac{k_1}{2}\pi\leq \arg G_2(t)\leq \frac{k_1+1}{2}\pi,
\]
so the argument of $G_2(t)$ on $I_1$ varies less than $\pi/2$.

Next, repeating the above argument for $G_3(z) = \exp(-\tfrac{k_1\pi i}{2} - \tfrac{\pi i}{4})G_2(z)$ instead of $G_2$, gives that there is a subinterval $I_2\subset I_1$ of lenght $|I_2|\geq B_2x^{-M_2}$ such that for $t\in I_2$ we have
\[
\frac{k_1}{2}\pi+\frac{k_2}{4}\pi\leq \arg G_2(t)\leq \frac{k_1}{2}\pi+\frac{k_2+1}{4}\pi
\]
for suitable $k_2\in\{0,1\}$, and hence the argument of $G_2(t)$ on $I_2$ varies less than $\pi/4$.

Thus, applying this reasoning sufficiently many times we can prove that there is an interval $I\subset I_0$ of length $|I|\geq Bx^{-N}$ such that the argument of $G_2(t)$ varies less than $\eta$, and the proof is complete.
\end{proof}

In the sake of simplicity, for a finite set $M$ of prime numbers and real numbers $\theta_p$, $p\in M$, define
\[
\zeta_M(s,(\theta_p) =\prod_{p\in M}\left(1-\frac{e(\theta_p)}{p^s}\right)^{-1},
\]
where, as usual, $e(t) = \exp(2 \pi i t)$. Moreover, let us call an open bounded subset $U$ of $\mathbb{C}$ admissible when for every sufficiently small positive $\varepsilon$ the set $U_\varepsilon:=\{s\in \mathbb{C}: \exists_{s_0\in U} |s-s_0|<\varepsilon\}$ has connected complement.

Now we are ready to formulate and prove the denseness lemma, which proof based on the following generalization of the classical Riemann rearrangement theorem.

\begin{lemma}[{\cite{Pe}}]\label{lem:Pech}
Let $H$ be a real Hilbert space and let $u_n\in H$ be such that $\sum_{n=1}^\infty\Vert u_n\Vert^2 < \infty$. Assume that for every $e\in H$ with $\Vert e\Vert = 1$ the series $\sum_{n=1}^\infty \langle u_n|e\rangle$ are conditionally converges after suitable permutation of terms. Then, for every $v\in H$ there exists a permutation $(n_k)$ such that $\sum_{k=1}^\infty u_{n_k} = v$.
\end{lemma}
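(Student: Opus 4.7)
The plan is to reduce to the classical finite-dimensional Lévy--Steinitz theorem and leverage the $\ell^2$ assumption to control tails. Let $H_0$ be the closed linear span of $\{u_n\}\cup\{v\}$, which is separable, and fix an orthonormal basis $\{e_k\}$ of $H_0$. For $d\geq 1$ let $P_d$ denote the orthogonal projection onto $\mathrm{span}(e_1,\ldots,e_d)$. The hypothesis that $\sum\langle u_n,e\rangle$ admits a conditionally convergent rearrangement for every unit $e$ says, in particular, that in each finite-dimensional subspace $P_dH_0$ the series $\sum P_du_n$ is conditionally convergent in the sense of Lévy--Steinitz, so the affine set of rearrangement sums in $\mathbb{R}^d$ equals all of $P_dH_0$; hence the target $P_dv$ is hit in finite dimensions.

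To promote this to a single permutation working in all dimensions simultaneously, I would set up a diagonal argument. Inductively construct a partition of $\mathbb{N}$ into finite blocks $B_1,B_2,\ldots$ and a permutation within each block so that, writing $S_d$ for the cumulative partial sum after blocks $B_1,\ldots,B_d$, one has $\|P_d(S_d-v)\|<2^{-d}$ while each internal partial sum throughout $B_d$ differs from $S_{d-1}$ by a controlled amount. The mechanism inside $B_d$ is the finite-dimensional Steinitz rearrangement lemma: vectors in $\mathbb{R}^d$ of norm at most $r$ whose sum is a prescribed target admit a rearrangement with all partial sums within $C(d)r$ of the straight line to that target. Apply this to the projections $P_d(u_n)$ for $n$ in a chosen finite set; the $\ell^2$ hypothesis $\sum\|u_n\|^2<\infty$ ensures, via Bessel's inequality applied coordinate-wise, that only finitely many $u_n$ have $\|u_n\|$ exceeding any prescribed $\varepsilon$, and that the total $\ell^2$ mass of $(I-P_d)u_n$ summed over any tail is as small as we please once $d$ is large.

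The main obstacle is that the Steinitz constant $C(d)$ grows with $d$, so simply requiring $\|P_du_n\|\leq 1$ is not enough to keep the path well-behaved as $d\to\infty$. The fix is to reserve the finitely many ``large'' terms with $\|u_n\|>\varepsilon_d$, insert them individually at the seams between blocks, and fill the bulk of $B_d$ with small terms of norm at most $\varepsilon_d=o(1/C(d))$; this is exactly where $\sum\|u_n\|^2<\infty$ (which forces $\|u_n\|\to 0$) is essential, and it is why the hypothesis is stronger than mere conditional convergence on every direction.

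Finally, convergence in norm of $\sum u_{n_k}=v$ for the resulting permutation follows from the Pythagorean identity $\|S_d-v\|^2=\|P_d(S_d-v)\|^2+\|(I-P_d)(S_d-v)\|^2$: the first summand is at most $2^{-2d}$ by construction, while the second is controlled by the tail $\sum_{n\text{ in future blocks}}\|(I-P_d)u_n\|^2$, which tends to zero uniformly in the rearrangement by the $\ell^2$ hypothesis together with dominated convergence as $d\to\infty$.
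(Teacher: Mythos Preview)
The paper does not prove this lemma at all: it is quoted verbatim from Pecherskii's 1973 note \cite{Pe} and used as a black box in the proof of Lemma~\ref{lem:dense}. There is therefore no ``paper's own proof'' to compare your attempt against.

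As for the sketch itself, the overall architecture---reduce to a separable subspace, project to finite dimensions, invoke L\'evy--Steinitz to see that the sum set in $\mathbb{R}^d$ is all of $P_dH_0$, then build a single permutation by a block construction governed by the Steinitz rearrangement constant---is indeed the standard route to Pecherskii-type theorems. Two points deserve more care if you intend to flesh it out. First, L\'evy--Steinitz gives the full space $P_dH_0$ only once you know that \emph{some} rearrangement of $\sum P_du_n$ converges in $\mathbb{R}^d$; the hypothesis gives this coordinate by coordinate, and you need an extra argument (or a direct appeal to the Steinitz lemma with $\|u_n\|\to 0$) to obtain a single rearrangement that converges simultaneously in all $d$ coordinates. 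Second, your final Pythagorean estimate is stated backwards: $(I-P_d)S_d$ is a sum over the \emph{past} blocks $B_1,\ldots,B_d$, not the future ones, so the tail you need to control is $\sum_{n\in B_1\cup\cdots\cup B_d}\|(I-P_d)u_n\|^2$ together with $\|(I-P_d)v\|$. This is still manageable from $\sum\|u_n\|^2<\infty$, but the bookkeeping is not quite what you wrote.
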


\begin{lemma}\label{lem:dense}
Let $U$ be an admissible set satisfying $\overline{U}\subset \{s\in\mathbb{C}:1/2<\Re(s)<1\}$ and $a,b\in\mathbb{Z}\setminus\{0\}$ with $a\ne\pm b$. Then there exists a sequence $\theta_p$ of real numbers indexed by primes such that for any analytic non-vanishing functions $f_a,f_b$ on $\overline{U}$, $\varepsilon>0$ and $y>0$ there exists a finite set of primes $M$ containing all primes $p\leq y$ such that
\[
\max_{c\in\{a,b\}}\max_{s\in \overline{U}}\left|\zeta_M(s,(c\theta_p)) - f_c(s)\right|<\varepsilon.
\]
\end{lemma}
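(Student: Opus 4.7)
The plan is to run a Voronin--Bagchi style argument in a two-component Hilbert space via Pechersky's rearrangement theorem (Lemma~\ref{lem:Pech}), with the key new ingredient being a choice of $(\theta_p)$ that exploits the fine information supplied by Lemma~\ref{lem:Arg}. I first enlarge $U$ to an admissible open set $U_0$ with $\overline{U}\subset U_0$ and $\overline{U_0}\subset\{s\in\mathbb{C}:1/2<\Re(s)<1\}$, and work in the real Hilbert space $H:=B^2(U_0)\oplus B^2(U_0)$. Since $f_a,f_b$ are non-vanishing and a neighborhood of $\overline{U}$ has connected complement (admissibility), each $f_c$ admits an analytic logarithm $h_c:=\log f_c$. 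The lemma then reduces to constructing $(\theta_p)_p$ so that the subset sums of the building blocks
\[u_p:=\bigl(-\log(1-e(a\theta_p)/p^s),\,-\log(1-e(b\theta_p)/p^s)\bigr)\in H\]
are dense in $H$, with the extra freedom to force $\{p\leq y\}\subset M$.

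Pechersky's first hypothesis is automatic, since $\|u_p\|_H\ll p^{-\sigma_1}$ gives $\sum_p\|u_p\|_H^2<\infty$ (with $1/2<\sigma_1<\Re s$ on $U_0$). The second requires $\sum_p|\langle u_p,(g_1,g_2)\rangle|=\infty$ for every nonzero $(g_1,g_2)\in H$. Expanding $-\log(1-e(c\theta_p)/p^s)=e(c\theta_p)/p^s+O(p^{-2\sigma_1})$ and writing $G_j(z)=\iint_{U_0}e^{-sz}\overline{g_j(s)}\,d\sigma dt$ as in Lemma~\ref{lem:Arg}, one has
\[\langle u_p,(g_1,g_2)\rangle=e(a\theta_p)G_1(\log p)+e(b\theta_p)G_2(\log p)+O(p^{-2\sigma_1}),\]
with summable error, so the task reduces to the divergence of $\sum_p|e(a\theta_p)G_1(\log p)+e(b\theta_p)G_2(\log p)|$.

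The construction of $(\theta_p)$ is the heart of the proof and is where $a\ne\pm b$ enters: unlike the irrational case, here $a\theta_p$ and $b\theta_p$ are correlated, so one cannot independently steer the two Euler factors at a single prime. I take $(\theta_p)$ independently uniform on $[0,1)$, with the aim of passing to a deterministic realization. For a fixed nonzero $(g_1,g_2)$, Lemma~\ref{lem:Arg} supplies intervals $I_n\subset[x_n,x_n+1]$ of length $\gg x_n^{-N}$ on which $|G_1(t)|\gg e^{-\sigma_2 t}$ and both $\arg G_j$ vary by less than any prescribed $\eta$. On such $I_n$ the phases $\beta_j:=\arg G_j$ are essentially constant, so the map
\[\theta\mapsto r_1(p)\,e(a\theta+\beta_1)+r_2(p)\,e(b\theta+\beta_2),\qquad r_j(p):=|G_j(\log p)|,\]
is a two-frequency trigonometric polynomial whose squared $L^2([0,1])$-norm equals $r_1(p)^2+r_2(p)^2$---the cross term vanishes precisely because $a\ne b$, a consequence of $a\ne\pm b$. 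Combining $\|\cdot\|_1\ge\|\cdot\|_2^2/\|\cdot\|_\infty$ with $\|\cdot\|_\infty\le\sqrt{2}\|\cdot\|_2$ yields expected $p$-th contribution $\gg\sqrt{r_1(p)^2+r_2(p)^2}\gg p^{-\sigma_2}$. Counting the primes in $\bigcup_n I_n$ by the prime number theorem gives $\sum_p\mathbb{E}|\cdot|\gg\sum_n e^{(1-\sigma_2)x_n}/x_n^{N+1}=\infty$ since $\sigma_2<1$, so Kolmogorov's three-series theorem yields a.s.\ divergence for that $(g_1,g_2)$. A countable dense family of directions together with a Baire-category argument on the \emph{bad} set $\{e\in H:\sum_p|\langle u_p,e\rangle|<\infty\}$ (a linear subspace expressible as a countable union of closed balanced sets) then produces a single deterministic $(\theta_p)$ for which Pechersky's hypothesis holds at every nonzero $(g_1,g_2)$.

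With this $(\theta_p)$ in hand, Pechersky's lemma provides a rearrangement of $\sum_p u_p$ converging to any prescribed target in $H$; applied to $(h_a,h_b)-\sum_{p\le y}u_p$ and truncated far enough, this yields a finite $M\supset\{p\leq y\}$ with $\sum_{p\in M}u_p$ arbitrarily close to $(h_a,h_b)$ in $H$. Standard Cauchy-integral bounds convert $B^2(U_0)$-proximity into uniform proximity on the compact $\overline{U}\subset U_0$, and exponentiating finally turns the additive approximation of $(\log f_a,\log f_b)$ into the required uniform approximation of $(f_a,f_b)$ by $(\zeta_M(s,(a\theta_p)),\zeta_M(s,(b\theta_p)))$, completing the proof.
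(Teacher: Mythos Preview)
Your overall architecture---Pechersky's rearrangement theorem in $B^2(U_0)\oplus B^2(U_0)$, reduction to $\sum_p|\langle u_p,e\rangle|=\infty$ for every nonzero $e$, and the use of Lemma~\ref{lem:Arg} to locate intervals on which $|G_1|\gg e^{-\sigma_2 x}$ with nearly constant phases---matches the paper. The genuine gap is the quantifier swap from ``for each fixed direction $e$, almost every $(\theta_p)$ gives divergence'' to ``some single $(\theta_p)$ gives divergence for \emph{every} nonzero $e$.'' Your Baire argument does not achieve this: the bad set $B=\{e:\sum_p|\langle u_p,e\rangle|<\infty\}$ is indeed an $F_\sigma$ linear subspace, but knowing that $B$ misses a countable dense set (equivalently, that $B$ has empty interior, or even that $B$ is meager) does \emph{not} force $B=\{0\}$. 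For instance, if $u_n=v/n$ for a fixed unit vector $v$ then $\sum\|u_n\|^2<\infty$ and $B=v^\perp$, a closed nowhere-dense hyperplane. Since the intervals $I_n$ supplied by Lemma~\ref{lem:Arg} depend on $(g_1,g_2)$, the null set of bad realizations varies with the direction, and nothing in your sketch gives the uniformity needed to exchange quantifiers. (A secondary imprecision: Pechersky's lemma is stated for the \emph{real} inner product $\Re\iint\cdots$, so the cross terms in your $L^2$ computation involve both frequencies $a\pm b$, and it is here---not merely $a\neq b$---that $a\neq\pm b$ enters.)

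The paper closes this gap by abandoning randomness altogether and making the explicit deterministic choice $\theta_{p_n}=n/l$ for a single integer $l=l(a,b)$ fixed in advance. For an arbitrary direction and an interval $I$ from Lemma~\ref{lem:Arg} on which $\arg G_j\approx\omega_j$, one then shows the existence of a residue class $k\bmod l$ with $\Re\, e(ak/l)G_1(t)\gg e^{-\sigma_2 x}$ and $\Re\, e(bk/l)G_2(t)\geq 0$ simultaneously for $t\in I$. This is where $|a|<|b|$ is genuinely used: the set of $\theta$ satisfying $|2\pi a\theta+\omega_1|\leq\pi/2-2\eta$ has image under $\theta\mapsto 2\pi b\theta+\omega_2$ of length $\tfrac{|b|}{|a|}(\pi-4\eta)>\pi$, hence must overlap $\{|\arg e(b\theta)G_2|\leq\pi/2\}$ on a set of measure $\gg 1/|ab|$, and $l$ is chosen large enough (depending only on $a,b$) that this set always contains some $k/l$. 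Summing over primes $p_n$ with $\log p_n\in I$ and $n\equiv k\bmod l$ then gives the required divergence, uniformly in the direction.
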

\begin{proof}
Let $U_1$ be a simply connected smooth Jordan domain such that $U_1$ is admissible, $f_a$, $f_b$ are analytic non-vanishing on $\overline{U_1}$ and $\overline{U}\subset U_1\subset\overline{U_1}\subset \{s\in\mathbb{C}:\sigma_1<\Re(s)<\sigma_2\}$ for suitable $\sigma_1,\sigma_2$ with $1/2<\sigma_1<\sigma_2<1$. For certain analytic $g_a$, $g_b$ we have $f_c =\exp g_c$ for $c\in\{a,b\}$.

Without loss of generality we can assume that $|a|<|b|$. Define
\[
u_p(s) = \left(-\log\left(1-\frac{e(a\theta_p)}{p^s}\right),-\log\left(1-\frac{e(b\theta_p)}{p^s}\right)\right)
\]
and 
\[
u^*_p(s) = \left(\frac{e(a\theta_p)}{p^s},\frac{e(b\theta_p)}{p^s}\right),
\]
where $\theta_{p_n} = \frac{n}{l}$, $p_n$ denotes the $n$-th prime number and $l$ is a positive integer depending on $a,b$, which we choose later.

We shall use Lemma \ref{lem:Pech} for the real Hilbert space $B^2(U_1)\times B^2(U_1)$ with the inner product given by
\[
\langle \phi,\psi\rangle = \sum_{j=1}^2\Re\iint_{U_1}\phi_j(s)\overline{\psi_j(s)}d\sigma dt
\]
for $\phi = (\phi_1,\phi_2)$, $\psi=(\psi_1,\psi_2)$.

We are going to prove that for any $\phi=(\phi_1,\phi_2)\in B^2(U_1)\times B^2(U_1)$ with $||\phi||=1$ there exists a permutation of the series $\sum_p u_p(s)$, which converges to $\phi$. Then, using the fact that $|f(z)|\leq\tfrac{||f||}{\sqrt{\pi}\operatorname{dist}(z,\partial{U_1})}$ for any analytic function $f$ and $z$ lying in the interior of $U_1$ (see \cite[Chapter III, Lemma 1.1]{G}), we get that approximation in respect to $L^2$ norm on $U_1$ implies uniform approximation on $U$, provided $\overline{U}\subset U_1$, which completes the proof.

Obviously, since $\Re(s)>\sigma_1>1/2$ for every $s\in\overline{U_1}$, we have $\sum_p||u_p(s)||^2<\infty$. Hence it suffices to prove that there are two permutations of  the series $\sum_p \langle u_p(s),\phi\rangle$ tending to $+\infty$ and $-\infty$, respectively. In fact, we show only the existence of a permutation of the series, which diverges to $+\infty$, since the case $-\infty$ is similar and can be left to the reader.
Moreover, let us observe that it is sufficient to prove it for $u^*_p(s)$ instead of $u_p(s)$, since $\sum_p (u_p(s)-u^*_p(s))$ converges absolutely for $\Re(s)>1/2$.

The case when $\phi_1=0$ or $\phi_2=0$ can be treated in the same way, so without loss of generality assume that $\phi_2=0$. Then we have to show that some permutation of the series
\[
\sum_p\langle  u^*_p,\phi\rangle = \sum_p \Re e(a\theta_p)\iint_{U_1}\frac{1}{p^s}\overline{\phi_1(s)} d\sigma dt
\]
diverges to $+\infty$.

By Lemma \ref{lem:Arg} we can show that there are infinitely many intervals $I=[x,x+Bx^{-N}]$ with $B,N>0$ such that $G_1(\log p) = \iint_{U_1}p^{-s}\overline{\phi_1(s)} d\sigma dt\gg \exp(-\sigma_2 x)$ and $|\arg G_1(\log p)-\omega_1|\leq \tfrac{\pi}{4}$ for suitable $\omega_1\in [-\pi,\pi]$, provided $\log p\in I$. Hence for sufficiently large $l>0$ there is an integer $k$ with $0\leq k<l$ such that $\arg e(ak/l)G_1(\log p)\in [-\pi/3,\pi/3]$, which implies that, if $\log p_n\in I$ and $n\equiv k\bmod l$, then $\Re e(e\theta_p)G_1(\log p)\geq c_1\exp(-\sigma_2 x)$ for some $c_1>0$. This together with $\sigma_2<1$ and the fact that the number of primes $p_n$ satisfying $\log p_n\in I$ and $n\equiv k\bmod l$ is $\gg e^x/x^{N+2}$ shows that there is a permutation $(n_k)$ such that $\sum_{k}\langle u^*_{p_{n_k}},\phi\rangle =+\infty$.

Next let us consider the case when $\phi_1\ne 0$ and $\phi_2\ne 0$. We have to show that there is a permutation $(n_k)$ such that
\[
\sum_{k}\langle  u^*_{p_{n_k}},\phi\rangle =\sum_{k}\Re e(a\theta_{p_{n_k}})G_1(\log p_{n_k})+\Re e(b\theta_{p_{n_k}})G_2(\log p_{n_k}) = +\infty,
\]
where
\[
G_j(z) = \iint_{U_1} e^{-sz}\overline{\phi_j(s)}d\sigma dt,\qquad j=1,2.
\]
Again, by Lemma \ref{lem:Arg}, we see that there exist infinitely many intervals $I=[x,x+Bx^{-N}]$ with $B,N>0$ such that
\[
G_1(t)\gg e^{-\sigma_2 x},\qquad t\in I
\]
and for every $\eta>0$ there are $\omega_1,\omega_2\in [-\pi,\pi]$ such that
\[
|\arg G_j(t) - \omega_j|\leq \eta,\qquad t\in I,\ j=1,2.
\]
We shall show that for sufficiently large $l$ there is $k$ with $0\leq k<l$ such that for $t\in I$ we have
\begin{equation}\label{eq:arg}
\arg e(ak/l)G_1(t)\in \left[-\frac{\pi}{2}+\eta,\frac{\pi}{2}-\eta\right]\qquad\text{and}\qquad \arg e(bk/l)G_2(t)\in \left[-\frac{\pi}{2},\frac{\pi}{2}\right].
\end{equation}
Then for $p_n$ satisfying $\log p_n\in I$ and $n\equiv k\bmod l$ we have
\[
\Re e(a\theta_{p_n})G_1(\log p_n)\geq  c_1e^{-\sigma_2 x}\quad(c_1>0)\qquad\text{and}\qquad \Re e(b\theta_{p_n})G_2(\log p_n)\geq 0.
\]
Hence
\[
\sum_{\substack{\log p_n\in I\\n\equiv k\bmod l}}\langle  u^*_{p_{n}},\phi\rangle\geq c'_1\frac{e^{(1-\sigma_2)x}}{x^{N+2}}
\]
for some positive constant $c'_1$.

In order to prove the existence of such $k$, notice that for every $\theta$ with $|2\pi a\theta + \omega_1|\leq \pi/2-2\eta$ we have
\[
\arg e(a\theta)G_1(t)\in \left[-\frac{\pi}{2}+\eta,\frac{\pi}{2}-\eta\right],\qquad t\in I,
\]
and 
\[
\left|2\pi b\theta+\frac{b}{a}\omega_1\right| = \frac{|b|}{|a|}|2\pi a\theta + \omega_1|\leq \frac{|b|}{|a|}\frac{\pi}{2} - \frac{|b|}{|a|}2\eta.
\]
From the assumption $|b|>|a|$ it is easy to observe that for sufficiently small $\eta:=\eta(a,b)>0$ the right hand side is at least $\tfrac{\pi}{2} + \tfrac{\pi}{4|a|}$. Hence the set
\[
\mathcal{A} = \left\{2\pi b\theta+\omega_2:|2\pi a\theta + \omega_1|\leq \pi/2-2\eta\right\}
\]
covers all values in the interval 
\[
\left[-\frac{|b|}{|a|}\frac{\pi}{2} + \frac{|b|}{|a|}2\eta+\omega_2-\frac{b}{a}\omega_1,\frac{|b|}{|a|}\frac{\pi}{2} - \frac{|b|}{|a|}2\eta+\omega_2-\frac{b}{a}\omega_1\right]
\] of length $\geq \pi + \tfrac{\pi}{2|a|}$. Then, for sufficiently small $\eta$, the set $\mathcal{A}$ contains an interval of size $\geq\tfrac{\pi}{4|a|}$ for which $\arg e(b\theta)G_2(t)\in \left[-\frac{\pi}{2},\frac{\pi}{2}\right]$, $t\in I$. Therefore, the set of $\theta$ satisfying
\[
\arg e(a\theta)G_1(t)\in \left[-\frac{\pi}{2}+\eta,\frac{\pi}{2}-\eta\right]\qquad\text{and}\qquad \arg e(b\theta)G_2(t)\in \left[-\frac{\pi}{2},\frac{\pi}{2}\right]
\]
has the measure $\geq\tfrac{1}{8|b||a|}$ , so for sufficiently small $l$, depending only on $a$ and $b$, this set contains a rational number of the form $\tfrac{k^*}{l}$, and \eqref{eq:arg} holds by taking $k\equiv k^* \bmod l$ with $0\leq k<l$.
\end{proof}

\section{Proof of Theorem \ref{th:main}}

In order to prove Theorem \ref{th:main} we shall use some results from the theory of diophantine approximation. 

Let us recall that a vector $\mathbf{x}\in\mathbb{R}^n$ belongs to $\gamma\subset\mathbb{R}^n$ mod $1$ if there exists a vector $\mathbf{y}\in\mathbb{Z}^n$ such that $\mathbf{x}-\mathbf{y}\in\gamma$.
\begin{thm}[Kronecker]\label{thm:Kronecker}
Let $\alpha_1,\ldots,\alpha_n$ be real numbers linearly independent over $\mathbb{Q}$ and $\gamma$ be a subregion of the $n$-dimensional unit cube with Jordan measure $m(\gamma)$. Then
\[
\lim_{T\to\infty}\frac{1}{T}\operatorname{meas}\left\{\tau\in (0,T]: (\alpha_1\tau,\ldots,\alpha_n\tau)\in\gamma\bmod 1\right\} = m(\gamma).
\]
\end{thm}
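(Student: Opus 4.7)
The plan is to deduce the statement from the continuous-time Weyl equidistribution criterion applied to the trajectory $\tau \mapsto (\alpha_1 \tau, \ldots, \alpha_n \tau)$ on the torus $\mathbb{T}^n = (\mathbb{R}/\mathbb{Z})^n$. First I would recall that, in the continuous-parameter setting, a measurable function $\varphi:[0,\infty)\to \mathbb{T}^n$ is equidistributed with respect to Haar measure $\mu$ if for every continuous $F:\mathbb{T}^n\to\mathbb{C}$
\[
\lim_{T\to\infty}\frac{1}{T}\int_0^T F(\varphi(\tau))\,d\tau = \int_{\mathbb{T}^n} F\,d\mu,
\]
and that, by Stone--Weierstrass, it suffices to verify this identity on the family of characters $F_{\mathbf{k}}(x) = e(\mathbf{k}\cdot x)$ with $\mathbf{k}\in\mathbb{Z}^n$.

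For $\mathbf{k}=\mathbf{0}$ both sides equal $1$, so the nontrivial check is for $\mathbf{k}\ne \mathbf{0}$. Setting $\beta := k_1\alpha_1+\cdots+k_n\alpha_n$, one computes
\[
\frac{1}{T}\int_0^T e\bigl(\mathbf{k}\cdot(\alpha_1\tau,\ldots,\alpha_n\tau)\bigr)\,d\tau = \frac{1}{T}\int_0^T e(\beta\tau)\,d\tau = \frac{e(\beta T)-1}{2\pi i \beta T}.
\]
Here the $\mathbb{Q}$-linear independence hypothesis is used precisely to ensure $\beta\ne 0$, and the displayed quantity is then $O(1/T)$, which tends to $0$. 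This establishes equidistribution against every continuous test function on $\mathbb{T}^n$.

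Finally, I would upgrade the conclusion from continuous $F$ to the indicator function $\mathbf{1}_\gamma$ of an arbitrary Jordan-measurable region $\gamma$ of the unit cube. Jordan measurability is precisely the statement that for every $\varepsilon>0$ one can find continuous $F^-,F^+:\mathbb{T}^n\to[0,1]$ with $F^-\le \mathbf{1}_\gamma\le F^+$ and $\int_{\mathbb{T}^n}(F^+-F^-)\,d\mu<\varepsilon$; applying the character-criterion step to $F^\pm$ and squeezing yields the asserted limit with value $m(\gamma)$.

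The main step where one could stumble is this last passage from continuous test functions to indicator functions: this is why the hypothesis demands that $\gamma$ be Jordan- (not merely Lebesgue-) measurable, since without control on the boundary $\partial\gamma$ the limit can fail or only hold in $\liminf$ form. Everything else in the argument is a routine Fourier-analytic computation on $\mathbb{T}^n$.
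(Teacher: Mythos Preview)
Your argument is correct and is the standard Weyl-criterion proof of this equidistribution statement. The paper itself does not give a proof at all: it simply cites \cite[Theorem~A.8.1]{KV}, so there is no in-paper argument to compare against, and what you have written is essentially what one finds in that reference.
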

\begin{proof}
This is \cite[Theorem A.8.1]{KV}.
\end{proof}

We say that the curve $\gamma(\tau): [0,\infty]\to\mathbb{R}^n$ is \emph{uniformly distributed mod $1$} in $\mathbb{R}^n$ if for every $\alpha_j,\beta_j$, $j=1,2,\ldots,n$, with $0\leq \alpha_j<\beta_j\leq 1$ we have
\[
\lim_{T\to\infty}\frac{1}{T}\operatorname{meas}\left\{\tau\in (0,T]: \gamma(\tau)\in[\alpha_1,\beta_1]\times\cdots\times[\alpha_n,\beta_n]\bmod 1\right\} = \prod_{j=1}^n (\beta_j-\alpha_j).
\]

\begin{lemma}\label{lem:Int}
Let $\gamma(\tau)$ be uniformly distributed mod $1$ in $\mathbb{R}^n$ and $X$ be a closed and Jordan measurable subregion of the unit cube in $\mathbb{R}^n$. Suppose that $\Omega$ is a family of complex-valued continuous functions defined on $X$. If $\Omega$ is uniformly bounded and equivcontinuous, then, uniformly on $\Omega$, we have
\[
\lim_{T\to\infty}\frac{1}{T}\int_{A_T}f(\{\gamma(\tau)\})d\tau = \idotsint_X f(x_1,\ldots,x_n) \prod_{j=1}^n dx_j,
\]
where $A_T$ denotes the set of $\tau\in (0,T]$ such that $\gamma(\tau)\in X$ mod $1$ and $\{\gamma(\tau)\}$ denotes the fractional part of $\gamma(\tau)$.
\end{lemma}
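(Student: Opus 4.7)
The plan is a routine Weyl-type approximation argument, with care taken to ensure all error estimates depend only on the sup-norm and modulus of continuity of $\Omega$, so that convergence is uniform in $f \in \Omega$.

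First, fix $\varepsilon > 0$. By the equicontinuity and uniform boundedness of $\Omega$, I would choose $\delta > 0$ and $M > 0$ such that $|f(x) - f(y)| < \varepsilon$ whenever $x, y \in X$ satisfy $|x - y| < \delta$, and $|f(x)| \leq M$, both holding for every $f \in \Omega$. Then I partition the unit cube $[0,1]^n$ into finitely many disjoint rectangular boxes $Y_1, \ldots, Y_N$ of diameter less than $\delta$ and set $X_k := X \cap Y_k$; since $X$ is Jordan measurable, so is each $X_k$. For each nonempty $X_k$ I pick $\xi_k \in X_k$ and define the step function $g_f(x) := f(\xi_k)$ for $x \in X_k$. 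By construction, $\sup_{x \in X}|f(x) - g_f(x)| < \varepsilon$ uniformly in $f \in \Omega$.

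Next, the hypothesis that $\gamma(\tau)$ is uniformly distributed mod $1$, applied to finite unions of rectangular subboxes of $[0,1]^n$, gives via the standard Jordan sandwich argument
\[
\lim_{T \to \infty} \frac{1}{T}\operatorname{meas}\bigl\{\tau \in (0,T] : \{\gamma(\tau)\} \in X_k\bigr\} = m(X_k), \qquad k = 1, \ldots, N.
\]
Since the partition is finite and depends only on $\varepsilon$ (through the equicontinuity modulus of $\Omega$) and on $X$, there is $T_0 = T_0(\varepsilon)$ such that for all $T \geq T_0$ each of these $N$ averages lies within $\varepsilon/(NM+1)$ of its limit. This estimate is automatically uniform in $f \in \Omega$, since neither $N$, $T_0$, nor the sets $X_k$ depend on $f$.

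Combining the two ingredients, for $T \geq T_0$ I would split
\[
\frac{1}{T}\int_{A_T} f(\{\gamma(\tau)\})\,d\tau = \sum_{k=1}^N f(\xi_k) \cdot \frac{\operatorname{meas}\bigl\{\tau \in (0,T] : \{\gamma(\tau)\}\in X_k\bigr\}}{T} + O(\varepsilon),
\]
where the implied constant depends only on $M$ and $m(X)$. For $T \geq T_0$ the main term equals $\sum_k f(\xi_k) m(X_k) + O(\varepsilon)$, and this sum is precisely the Riemann-sum approximation of $\idotsint_X g_f$, which in turn equals $\idotsint_X f + O(\varepsilon \cdot m(X))$. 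Since every implicit constant depends only on $M$ and $m(X)$, letting $\varepsilon \to 0$ after $T \to \infty$ yields the claim uniformly in $f \in \Omega$. The only point requiring attention is the bookkeeping to ensure uniformity over $\Omega$; the essential content reduces to applying uniform distribution to finitely many Jordan-measurable pieces, which is built into the hypothesis, and so no substantially new input is needed.
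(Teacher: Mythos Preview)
Your argument is correct: it is the standard Weyl--Riemann-sum approach, and the bookkeeping for uniformity in $f\in\Omega$ is handled properly since the partition $\{X_k\}$, the bound $M$, and the threshold $T_0$ all depend only on $\varepsilon$, $X$, and the common modulus of continuity of $\Omega$, not on the individual $f$.

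For comparison, the paper does not give its own proof of this lemma at all; it simply cites \cite[Theorem~A.8.3]{KV} (Karatsuba--Voronin). Your write-up is essentially what one would find upon unpacking that reference, so there is no genuine methodological difference---you have supplied the details that the paper outsources.
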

\begin{proof}
This is \cite[Theorem A.8.3]{KV}.
\end{proof}

Moreover, we shall need the following result due to Mergelyan.
\begin{thm}[Mergelyan]
Let $K$ be a compact set with connected complement and $f(s)$ be continuous on $K$ and analytic in the interior of $K$. Then, for every $\varepsilon>0$, there exists a polynomial $P(s)$ such that
\[
\max_{s\in K} |f(s)-P(s)|<\varepsilon.
\]
\end{thm}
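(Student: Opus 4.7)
The plan is to combine the Cauchy--Pompeiu formula with Runge's theorem: first show that $f$ can be uniformly approximated on $K$ by a rational function whose poles all lie in $\mathbb{C}\setminus K$, and then invoke the connectedness of the complement to pass from rational to polynomial approximation.

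First, I would extend $f$ to a continuous, compactly supported function $\tilde f$ on $\mathbb{C}$ via Tietze's theorem, and mollify: set $\tilde f_\delta = \tilde f \ast \varphi_\delta$, where $\varphi_\delta$ is a smooth non-negative bump supported in the disk of radius $\delta$. Uniform continuity of $f$ on $K$ guarantees $\|\tilde f_\delta - f\|_{C(K)} \to 0$ as $\delta \to 0$, so it suffices to approximate $\tilde f_\delta$ uniformly on $K$ by a polynomial. Since $\tilde f_\delta$ is smooth with compact support, the Cauchy--Pompeiu formula gives
\[
\tilde f_\delta(z) = -\frac{1}{\pi}\iint_{\mathbb{C}} \frac{\bar\partial \tilde f_\delta(w)}{w-z}\, dA(w).
\]
The crucial feature is that since $f$ is holomorphic in the interior of $K$, the density $\bar\partial \tilde f_\delta$ vanishes at every point of the interior of $K$ whose distance to $\partial K$ exceeds $\delta$, and its size elsewhere is controlled by $\omega_f(\delta)/\delta$, where $\omega_f$ denotes the modulus of continuity of $\tilde f$.

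Next, I would discretize: partition the support of $\bar\partial \tilde f_\delta$ into closed squares $Q_j$ of side $\delta$, and for each $j$ with $Q_j \not\subset \operatorname{int}(K)$ pick a point $w_j \in Q_j \setminus K$ (squares sitting inside the interior contribute zero and are dropped). Replacing the integral by the rational sum
\[
R(z) = \sum_j \frac{c_j}{w_j - z}, \qquad c_j = -\frac{1}{\pi}\iint_{Q_j}\bar\partial \tilde f_\delta(w)\, dA(w),
\]
produces a function with all poles in $\mathbb{C} \setminus K$; a Vitushkin-style localization estimate yields $\|R - \tilde f_\delta\|_{C(K)} \ll \omega_f(\delta)$. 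Finally, since $\mathbb{C} \setminus K$ is connected and unbounded, Runge's classical pole-pushing argument approximates each $(w_j - z)^{-1}$, and hence the entire sum $R$, uniformly on $K$ by a polynomial.

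The main obstacle is the discretization step, because the Cauchy kernel $1/(w-z)$ is not locally integrable at $w = z$: a naive estimate of the error obtained by replacing $\iint_{Q_j}\bar\partial \tilde f_\delta(w)/(w-z)\, dA(w)$ by the single pole $c_j/(w_j - z)$ blows up for $z \in K$ lying near $Q_j$. One therefore has to exploit cancellation, via Vitushkin's localization lemma, which rests on the fact that the analytic capacity of $Q_j \setminus K$ is comparable to $\delta$ --- a consequence of the connected-complement hypothesis --- to secure an $O(\omega_f(\delta))$ error uniformly on $K$. Controlling this term is the technical heart of Mergelyan's theorem; once it is in place, Runge's theorem closes the proof in a single line.
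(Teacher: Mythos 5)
First, note that the paper does not prove this statement at all: it is the classical theorem of Mergelyan, and the paper simply cites Gaier's book (Chapter III). So what you have written is a reconstruction of the standard textbook proof (Rudin, Chapter 20; Vitushkin's localization scheme), and as an outline it identifies the right strategy: Tietze extension, mollification, Cauchy--Pompeiu, discretization of the $\bar\partial$-mass, and Runge pole-pushing, with the connected complement entering through a capacity-type estimate. However, the step you yourself call the technical heart is not carried out, and the specific assertions you use to bridge it are false as stated, so this is a sketch with a genuine gap rather than a proof.

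Concretely: (i) a square $Q_j$ of side $\delta$ meeting the support of $\bar\partial\tilde f_\delta$ need not meet $\mathbb{C}\setminus K$ at all --- it can lie inside $K$ touching $\partial K$, or even inside $\operatorname{int}(K)$ within distance $\delta$ of the boundary, where $\bar\partial\tilde f_\delta$ need not vanish; so such squares cannot be ``dropped'' as contributing zero, and the point $w_j\in Q_j\setminus K$ need not exist. (ii) Even when $Q_j\setminus K\neq\emptyset$, its analytic capacity is not comparable to $\delta$: connectedness and unboundedness of $\mathbb{C}\setminus K$ only guarantee a connected piece of the complement of diameter $\gtrsim\delta$ inside an \emph{enlarged} disk of radius about $2\delta$ around the square, not inside $Q_j$ itself (the complement may enter $Q_j$ only through a tiny sliver). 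This enlarged-disk statement is precisely where the connected-complement hypothesis is used in Mergelyan's and Vitushkin's arguments. (iii) The single-pole sum $R$ does not satisfy $\Vert R-\tilde f_\delta\Vert_{C(K)}\ll\omega_f(\delta)$: near $Q_j$ the term $c_j/(w_j-z)$ with $|c_j|\asymp\omega_f(\delta)\delta$ is uncontrolled on $K$ because $\operatorname{dist}(w_j,K)$ can be arbitrarily small compared with $\delta$, and even in the far field matching only the residue gives an error of order $\omega_f(\delta)\,\delta^2|z-w_j|^{-2}$ per square, which sums to $\omega_f(\delta)\log(1/\delta)$, not $\omega_f(\delta)$. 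The actual proof replaces $1/(z-w_j)$ by a function $g_j$ analytic off a connected compact subset of the complement satisfying $|g_j|\lesssim\delta^{-1}$ and $|g_j(z)-1/(z-w)|\lesssim\delta^2|z-w|^{-3}$, and it is exactly these bounds (Rudin's Lemma 20.2, resting on the diameter bound from (ii)) that make the total error $O(\omega_f(\delta))$. Since the paper itself treats this theorem as a citation, either do likewise or supply this localization lemma; as written, the proposal asserts the decisive estimate rather than proving it.
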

\begin{proof}
See \cite[Chapter III]{G}.
\end{proof}

\begin{proof}[Proof of Theorem \ref{th:main}]
By Mergelyan's theorem it suffices to assume that $f_a,f_b$ are polynomials without zeros on $K$. Then we can find an admissible set $U$ such that $f_a$, $f_b$ have no zeros on the closure of $U$ and $K\subset U\subset\overline{U}\subset\{s\in\mathbb{C}:\tfrac{1}{2}<\Re(s)<1\}$.

Let us fix $\varepsilon>0$. By Lemma \ref{lem:dense}, for any $y>0$, there exists a finite set of primes $M$ containing all primes $p\leq y$ and the sequence $\theta_p$, $p\in M$, such that
\[
\max_{c\in\{a,b\}}\max_{s\in \overline{U}}|\zeta_M(s;c\theta) - f_c(s)|<\varepsilon.
\]
Moreover, if 
\begin{equation}\label{eq:LogP}
\max_{p\in M}\left\Vert \tau\frac{\log p}{2\pi} - \theta \right\Vert<\delta\qquad\text{for sufficiently small $\delta$},
\end{equation}
then 
\[
\max_{c\in\{a,b\}}\max_{p\in M}\left\Vert \tau\frac{c\log p}{2\pi} - c\theta\right\Vert<c\delta,
\]
and, by continuity,
\begin{equation}\label{eq:cont}
\max_{c\in\{a,b\}}\max_{s\in\overline{U}}|\zeta_M(s+ic\tau;\mathbf{0}) - f_c|<\varepsilon;
\end{equation}
here $\mathbf{0}$ is the sequence of zeros and $||\cdot||$ denotes the distance to the nearest integer.

Put $Q:=\{p:p\leq z\}$ for $z>y$ such that $M\subset Q$ and define the set
\[
\mathcal{D} :=\{(\omega_p)_{p\in Q}:\max_{p\in M}\Vert\omega_p-\theta_p\Vert<\delta\}.
\]
Next let us consider
\[
S = \sum_{c\in\{a,b\}}\frac{1}{T}\int_{A_T}\left(\iint_U \left|\zeta(s+ic\tau)-\zeta_M(s+ic\tau;\mathbf{0})\right|^2 d\sigma dt\right)d\tau,
\]
where $T>1$ and $A_T$ is the set of $\tau\in [0,T]$ satisfying \eqref{eq:LogP}.

By Cauchy-Schwarz inequality, we get $S \leq 2S_1+2S_2$, where
\[
S_1=\sum_{c\in\{a,b\}}\frac{1}{T}\int_{A_T}\left(\iint_U \left|\zeta_Q(s+ic\tau,\mathbf{0})-\zeta_M(s+ic\tau;\mathbf{0})\right|^2 d\sigma dt\right)d\tau
\]
and
\[
S_2=\sum_{c\in\{a,b\}}\frac{1}{T}\int_{A_T}\left(\iint_U \left|\zeta(s+ic\tau)-\zeta_Q(s+ic\tau;\mathbf{0})\right|^2 d\sigma dt\right)d\tau.
\]

By the unique factorization of integers, the curve $\gamma(\tau) = (\tau\tfrac{\log p}{2\pi})_{p\in Q}$ is uniformly distributed modulo $1$ on $\mathbb{R}^{\pi(z)}$. Hence, by Lemma \ref{lem:Int} and the fact that there is only restriction on $\omega_p$ with $p\in M$ in the definition of the set $\mathcal{D}$, we have
\begin{align*}
&\lim_{T\to\infty}\frac{1}{T}\int_{A_T}\left|\zeta_Q\left(s,\left(\tau\frac{c\log p}{2\pi}\right)\right)-\zeta_M\left(s;\left(\tau\frac{c\log p}{2\pi}\right)\right)\right|^2 d\tau\\
 &\qquad\qquad\qquad\qquad= \idotsint\displaylimits_{\mathcal{D}}\left|\zeta_M(s;(c\omega_p))\right|^2\left|\zeta_{Q\setminus M}(s;(c\omega_p))-1\right|^2 \prod_{p\in Q}d\omega_p\\
  &\qquad\qquad\qquad\qquad\leq \left(\max_{s\in \overline{U}}|f_c(s)|+\varepsilon\right)^2 \idotsint\displaylimits_{\mathcal{D}}\left|\zeta_{Q\setminus M}(s;(c\omega_p))-1\right|^2 \prod_{p\in Q}d\omega_p\\
   &\qquad\qquad\qquad\qquad\ll m(\mathcal{D})\int_0^1\ldots\int_0^1\left|\zeta_{Q\setminus M}(s;(c\omega_p))-1\right|^2 \prod_{p\in Q\setminus M}d\omega_p.
\end{align*}
Moreover, by easy calculation and the fact that $Q\setminus M$ contains only primes greater than $y$, one can show that
\[
\int_0^1\ldots\int_0^1\left|\zeta_{Q\setminus M}(s;(c\omega_p))-1\right|^2 \prod_{p\in Q\setminus M}d\omega_p\leq \sum_{n>y}\frac{1}{n^{2\sigma}},\qquad s=\sigma+it\in\overline{U}.
\]
Therefore, since $\sigma>\tfrac{1}{2}$ for all $s\in\overline{U}$, we have
\[
S_1\leq \frac{1}{4}m(\mathcal{D})\varepsilon^2
\]
for sufficiently large $y>0$.

Now, using the well-know estimate for the mean-square of the Riemann zeta function and Carlson's theorem (see \cite[Theorem A.2.10]{KV}) gives that
\[
S_2\leq \frac{1}{4}m(\mathcal{D})\varepsilon^2
\]
for sufficiently large $z>0$, and we have
\[
S\leq m(\mathcal{D})\varepsilon^2.
\]

On the other hand, we know that the sequence $\tfrac{\log p}{2\pi}$, $p\in Q$, is linearly independent over $\mathbb{Q}$, so by the Kronecker approximation theorem, we get
\[
\lim_{T\to\infty}\frac{1}{T}\int_{A_T}d\tau = m(\mathcal{D}).
\]
Thus, by \eqref{eq:cont}, one can show that the set of $\tau\in A_T$ satisfying
\[
\max_{c\in\{a,b\}}\iint_{U}|\zeta(s+ic\tau)-f_c(s)|^2d\sigma dt \ll \varepsilon^2
\]	
has measure $\gg T$. Therefore, since approximation in respect to $L^2(U)$-norm implies uniform approximation on $K\subset U$ (see eg. \cite[Chapter I, Section 1, Lemma 1]{G}), the proof is complete.
\end{proof}


\begin{thebibliography}{99}

\bibitem {B81} B. Bagchi, \textit{The statistical Behavior and Universality Properties of the Riemann Zeta-Function and Other Allied Dirichlet Series}, Ph.D. Thesis, Calcutta, Indian Statistical Institute, (1981).

\bibitem{B82} B. Bagchi, \textit{A joint universality theorem for Dirichlet L-functions}, Math. Z. \textbf{181} (1982), 319--334.

\bibitem{B87} {B. Bagchi}, {\it Recurrence in topological dynamics and the Riemann hypothesis}, Acta Math. Hung. {\bf 50} (1987), 227--240.

\bibitem{G} D. Gaier, \textit{Vorlesungen \"uber Approximation im Komplexen}, Basel: Birkh\"auser, 1980.

\bibitem{Ga} {R. Garunk\v{s}tis}, {\it Self-approximation of Dirichlet L-functions}, J.Number Theory {\bf 131} (2011), 1286--1295.

\bibitem {GH} W.H. Gottschalk, G.A. Hedlund, \textit{Recursive properties of topological transformation groups}, Bull. Amer. Math. Soc. \textbf{52} (1946), 488--489.

\bibitem{KK} J. Kaczorowski and M. Kulas, \textit{On the non-trivial zeros off the critical line for L-functions from extended Selberg class}, Monatshefte Math. \textbf{150} (2007), 217--232.

\bibitem{KV} A.A. Karatsuba, S.M. Voronin, \textit{The Riemann Zeta Function}, Berlin: de Gruyter, 1992.

\bibitem{M} H. Mishou, \textit{Joint universality theorems for pairs of automorphic zeta functions}, Math. Z. \textbf{277} (2014), 1113--1154.

\bibitem{N10} {T. Nakamura}, {\it The generalized strong recurrence for non-zero rational parameters}, Arch. Math. \textbf{95} (2010), 549--555.


\bibitem{N09} {T. Nakamura}, {\it The joint universality and the generalized strong recurrence for Dirichlet $L$-functions}, Acta Arith. {\bf 138} (2009), 357--362.

\bibitem{NP} {T. Nakamura, \L. Pa\'nkowski}, \textit{Erratum to: The generalized strong recurrence for non-zero rationals parameters}, Arch. Math. \textbf{99} (2012), 43--47.


\bibitem{P09} {\L. Pa\'nkowski}, {\it Some remarks on the generalized strong recurrence for $L$-functions},
in: New Directions in Value Distribution Theory of zeta and L-Functions, Ber.Math., Shaker Verlag, Aachen,(2009), 305--315.

\bibitem{Pe} D.V. Pecherskii, \textit{On rearranggements of terms in functional series}, Soviet Math. Dokl. \textbf{14} (1973), 633--636.

\bibitem{QQ} H. Queff\'elec and M. Queff\'elec, \textit{Diophantine Approximation and Dirichlet Series}, HRI Lecture Notes Series 2, AMS, 2013.

\bibitem{Sch} A.C. Schaeffer, \textit{Inequalities of A. Markoff and S. Bernstein for polynomials and related
functions}, Bull Amer Math Soc \textbf{47} (1941), 565-–579.

\bibitem{S} J. Steuding, \textit{Value-Distribution of L-functions}, Springer, Berlin (2007).

\bibitem{V} S.M. Voronin, \textit{Theorem on the universality of the Riemann zeta function}, Izv. Akad. Nauk SSSR Ser. Mat. \textbf{39} (1975), 475--486 (in Russian); Math. USSR Izv. \textbf{9} (1975), 443--453.

\end{thebibliography}
\end{document}